\documentclass[11pt]{amsart}

\usepackage[T1]{fontenc}
\usepackage{lmodern}
\usepackage{microtype}
\usepackage{amsmath,amssymb,amsthm,mathtools,mathrsfs}
\usepackage{enumitem}
\usepackage[hidelinks]{hyperref}

\hypersetup{
  pdftitle={Endpoint resonance in sharp Friedlander--Iwaniec dual sums},
  pdfauthor={Khai-Hoan Nguyen-Dang},
  pdfsubject={Sharp nonlinear dual sums attached to L-functions},
  pdfkeywords={Friedlander--Iwaniec conjecture; nonlinear dual sums; sharp truncation; endpoint resonance; divisor functions; automorphic L-functions}
}

\theoremstyle{plain}
\newtheorem{theorem}{Theorem}[section]
\newtheorem{proposition}[theorem]{Proposition}

\newtheorem{corollary}[theorem]{Corollary}
\newtheorem{conjecture}[theorem]{Conjecture}
\theoremstyle{definition}

\theoremstyle{remark}
\newtheorem{remark}[theorem]{Remark}

\newcommand{\C}{\mathbb C}
\newcommand{\R}{\mathbb R}
\newcommand{\Z}{\mathbb Z}
\newcommand{\eps}{\varepsilon}
\newcommand{\e}{\mathrm e}
\newcommand{\Bcal}{\mathcal B}
\newcommand{\Res}{\operatorname*{Res}}
\newcommand{\distT}{\operatorname{dist}_{2\pi}}
\newcommand{\doi}[1]{\href{https://doi.org/#1}{\nolinkurl{doi:#1}}}
\numberwithin{equation}{section}
\allowdisplaybreaks
\title[Counterexamples of  Friedlander--Iwaniec dual sums conjecture]
{Counterexamples of Friedlander--Iwaniec dual sums conjecture}

\author{Khai-Hoan Nguyen-Dang}
\address{Morningside Center of Mathematics, Chinese Academy of Sciences, No.~55 Zhongguancun East Road, Beijing 100190, China}
\email{khaihoann@gmail.com}

\subjclass[2020]{Primary 11M41, 11L07; Secondary 11N37, 11F66}
\keywords{Friedlander--Iwaniec conjecture, nonlinear dual sums, sharp truncation, endpoint resonance, divisor functions, automorphic $L$-functions}

\begin{document}

\begin{abstract}
Let $a(n)$ and $b(n)$ are arithmetic sequences, and
\[
  A(s)=\sum_{n\ge1}a(n)n^{-s},
  \qquad
  B(s)=\sum_{n\ge1}b(n)n^{-s},
\]
be the two Dirichlet series related by a certain
functional equation. Let $m$ be the \emph{analytic degree} of the functional equation.  For \(x>0\) and a positive integer \(N\), Friedlander and Iwaniec (2005) define the sharply truncated nonlinear dual sum
\[
  \mathcal B_{\ell,D}(x,N)
  :=
  \sum_{\substack{n\in\mathbb N\\ n\le N}}
  b(n)n^{-\beta_m}
  \cos\!\left(
    2\pi m\left(\frac{nx}{D}\right)^{1/m}
    +\frac{\pi\ell}{4}
  \right),
\]
where
\(D\ge1\) is the conductor, \(\beta_m:=\frac{m+1}{2m}\), and \(\ell=m-3-2k\)
is determined by the archimedean weight \(k\) of the functional equation. 

Their Conjecture~1 predicts that, for every \(\varepsilon>0\),
\[
  \mathcal B_{\ell,D}(x,N)
  \ll_{\varepsilon,\boldsymbol\kappa}
  (DNx)^\varepsilon,
\]
uniformly in the variables \(x\) and \(N\), with the degree, conductor, and archimedean datum fixed.  We give counterexamples to this prediction with
\[
  A(s)=B(s)=\zeta(s)^m,\; m\geq 4
\]
where
\[
  \zeta(s):=\sum_{n\ge1}n^{-s}
  \qquad(\operatorname{Re}s>1)
\]
is the Riemann zeta function.
\end{abstract}

\maketitle

\section{Introduction}

\subsection{Arithmetic sums and Dirichlet series}
Let
\begin{equation}\label{eq:basic-data-intro}
  S_a(x):=\sum_{n\le x}a(n),
  \qquad
  A(s):=\sum_{n\ge1}a(n)n^{-s}.
\end{equation}
A fundamental problem of number theory is to recover precise information about the sumfunction $S_a(x)$ from analytic properties of its Dirichlet series $A(s)$. For example, under certain coefficient hypotheses Perron formula gives (see \cite[Lemma~3.12]{Titchmarsh}), with $\alpha=1+\varepsilon$ and $1\le T\le x$,
\begin{equation}\label{eq:Perron-intro}
  S_a(x)
  =\frac{1}{2\pi i}
   \int_{\alpha-iT}^{\alpha+iT}
     A(s)\frac{x^s}{s}\,ds
   +O_\varepsilon\!\left(\frac{x^{1+2\varepsilon}}{T}\right).
\end{equation}

Suppose moreover that $A(s)$ has analytic continuation to a region to the left of $\Re s=\alpha$ and has sufficient growth control on vertical lines.  One closes a rectangle and moves the vertical side from $\Re s=\alpha$ to a line $\Re s=c<\alpha$.  The residue theorem gives
\begin{align}
  S_a(x)
  &=\sum_{\rho\ \mathrm{crossed}}
   \Res_{s=\rho}\!\left(A(s)\frac{x^s}{s}\right)
   +\frac{1}{2\pi i}\int_{c-iT}^{c+iT}
      A(s)\frac{x^s}{s}\,ds \notag\\
  &\quad+\text{controlled truncation terms}.
  \label{eq:contour-shift-schematic}
\end{align}
If $\rho\ne0$ is a pole of $A(s)$ of order $r$, its residue in \eqref{eq:contour-shift-schematic} has the form
\[
  x^\rho P_{\rho,r-1}(\log x),
\]
where $P_{\rho,r-1}$ is a polynomial of degree at most $r-1$. For example, the pole of the kernel $1/s$ at $s=0$ is treated at \cite[(2.2)--(2.3)]{FriedlanderIwaniec}. The authors shift from $\Re s=1+\varepsilon$ to $\Re s=-\varepsilon$, the crossed poles are the pole of $A(s)$ at $s=1$ and the kernel pole at $s=0$.

\subsection{Functional equations, duality, and stationary phase}

In \cite{FriedlanderIwaniec}, Friedlander and Iwaniec assume that, for $\operatorname{Re}s>1$,
\begin{equation}\label{eq:FE-intro}
  A(1-s)=w\gamma(s)B(s),
  \qquad |w|=1,
  \qquad
  B(s)=\sum_{n\ge1}b(n)n^{-s},
\end{equation}
where $B(s)$ converges absolutely for $\Re s>1$ and $\gamma(s)$ is an archimedean gamma quotient.  The complete hypotheses are recalled in Section~\ref{sec:zeta}.  

After the contour shift, the change of variable $s\mapsto1-s$ and \eqref{eq:FE-intro} return the remaining integral to the line $\Re s=\alpha=1+\varepsilon$:
\begin{equation}\label{eq:dual-integral-intro}
  I(x)
  =\frac{w}{2\pi i}
   \int_{\alpha-iT}^{\alpha+iT}
     \gamma(s)B(s)\frac{x^{1-s}}{1-s}\,ds.
\end{equation}
Since $B(s)$ is absolutely convergent there, termwise integration is legitimate and yields (see equations~(2.3)--(2.5) in \cite[p.~497]{FriedlanderIwaniec})
\begin{equation}\label{eq:dual-kernel-intro}
  I(x)=wx\sum_{n\ge1}b(n)c_T(nx),
  \qquad
  c_T(y):=\frac{1}{2\pi i}
  \int_{\alpha-iT}^{\alpha+iT}
       \frac{\gamma(s)}{1-s}y^{-s}\,ds.
\end{equation}
We call \eqref{eq:dual-kernel-intro} a \emph{dual expression} because the original coefficients $a(n)$ have been replaced by the coefficients $b(n)$ paired with them by the functional equation, and the original sharp step has been replaced by the gamma-factor transform $c_T$.  In the standard automorphic situation, $B(s)$ is the $L$-series attached to the representation-theoretic dual; for the classical $\mathrm{GL}_3$ formulation (see Miller--Schmid \cite[(5.5)]{MillerSchmid}).  

The preceding argument belongs to the broad duality philosophy of Poisson and
Voronoi summation. Poisson summation underlies the theta transformation, the functional equation of $\zeta(s)$, lattice-point problems, and completion of finite exponential sums;
see \cite[\S4.3]{IwaniecKowalski} and
\cite[(1.1), \S2]{MillerSchmid}. Voronoi summation is an arithmetic transform whose kernel is governed by the
functional equation of an $L$-function.  See Vorono\"i's original memoirs
\cite{Voronoi1904a,Voronoi1904b} and the modern discussion
\cite[\S3]{MillerSchmid}. The relation between multiple gamma factors and summation
formulae was developed by Chandrasekharan and Narasimhan
\cite{ChandrasekharanNarasimhan}.  For $r\ge1$, $\mathrm{GL}_r$ denotes
the general linear group of invertible $r\times r$ matrices.  For automorphic
formulae on $\mathrm{GL}_3$ and $\mathrm{GL}_n$, see
\cite{MillerSchmidGL3,MillerSchmidGLn}.

\subsection{The Friedlander--Iwaniec sharp sum}

Put
\[
  \theta_m:=\frac{m-1}{2m},
  \qquad
  \beta_m:=\frac{m+1}{2m}=1-\theta_m,
\]
and define
\begin{equation}\label{eq:FI-B-intro}
  \Bcal_{\ell,D}(x,N)
  :=\sum_{n\le N}b(n)n^{-\beta_m}
  \cos\!\left(
    2\pi m\left(\frac{nx}{D}\right)^{1/m}
    +\frac{\pi\ell}{4}
  \right).
\end{equation}
Let $\boldsymbol\kappa:=(\kappa_1,\ldots,\kappa_m)$ denote the
archimedean parameter array, recalled precisely in \eqref{eq:FI-gamma}.  For
the real parameter arrays relevant to this paper, the one-cosine form extracted
in the proof of \cite[Theorem~1.2]{FriedlanderIwaniec} is
\begin{align}
  \sum_{n\le x}a(n)
  &=R_A(x)
   +\frac{w}{\pi\sqrt m}D^{1/(2m)}x^{\theta_m}
    \Bcal_{\ell,D}(x,N) \notag\\
  &\quad+O_{\varepsilon,\boldsymbol\kappa}\!\left(
      D^{1/m}N^{-1/m}x^{(m-1)/m+\varepsilon}
    \right),
  \label{eq:FI-formula-intro}
\end{align}
where
\[
  R_A(x):=\Res_{s=1}\!\left(A(s)\frac{x^s}{s}\right),
  \qquad x\ge D^{1/2},
  \qquad 1\le N\le x.
\]

Friedlander and Iwaniec formulate the following conjecture.

\begin{conjecture}[Friedlander--Iwaniec Conjecture~1]\label{conj:FI1}
For every $\varepsilon>0$ and every automorphic series $B(s)$ in their
framework, with the automorphic datum fixed,
\begin{equation}\label{eq:FI-conj-intro}
  \Bcal_{\ell,D}(x,N)
  \ll_{\varepsilon,\boldsymbol\kappa}(DNx)^\varepsilon.
\end{equation}
\end{conjecture}

We explain why the conjecture is hard at first glance. The assumed coefficient bound $b(n)\ll_\eta n^\eta$ gives only
\begin{equation}\label{eq:FI-trivial-bound}
  |\Bcal_{\ell,D}(x,N)|
  \le\sum_{n\le N}|b(n)|n^{-\beta_m}
  \ll_\eta N^{1-\beta_m+\eta}
  =N^{\theta_m+\eta}.
\end{equation}
Conjecture~\ref{conj:FI1} asks that the entire positive power
$N^{\theta_m}$ be replaced by a subpower factor.  A heuristic provides limited motivation. For example, if
$0<\eta<1/(2m)$, then
\[
  \sum_{n\ge1}|b(n)|^2n^{-2\beta_m}
  \ll_\eta\sum_{n\ge1}n^{-1-1/m+2\eta}<\infty.
\]

Friedlander and Iwaniec stress that their method cannot improve the trivial
bound for the dual sum because the functional equation is involutory: applying the duality twice returns to the original series. In particular, they treat the completely factorizable degree-three datum
\[
  A(s)=L(s,\chi_1)L(s,\chi_2)L(s,\chi_3),
  \qquad D=D_1D_2D_3,
\]
where the $\chi_j$ are primitive Dirichlet characters. They prove
\begin{equation}\label{eq:FI-degree-three-B-progress}
  \Bcal_{\ell,D}(x,N)
  \ll
  \bigl(D^{20}N^{11}x\bigr)^{1/42}(\log x)^3;
\end{equation}
see \cite[(4.4)--(4.9)]{FriedlanderIwaniec}.  This saves a power of $N$ over
the trivial estimate in suitable parameter ranges, but is far from a
subpower bound.

The same nonlinear phase occurs in the \emph{standard twist}
\[
  F(s,\alpha)
  :=\sum_{n\ge1}a(n)e(-\alpha n^{1/m})n^{-s}.
\]
Kaczorowski and Perelli establish its meromorphic continuation, a functional
equation of a new type, and a detailed description of its poles and residues
\cite[(1.1), Theorems~2--3]{KaczorowskiPerelli}.  These results clarify the analytic structure of
the nonlinear twist but do not estimate every sharp partial sum in
Conjecture~\ref{conj:FI1}.  Lin and Sun obtain nontrivial estimates for smoothly weighted nonlinear
twists attached to $\mathrm{GL}_3\times\mathrm{GL}_2$ and derive sharp-cut
consequences in specified parameter ranges
\cite[Theorem~1.1 and Corollary~1.2]{LinSun}.  These results are
substantial, but they do not furnish a subpower estimate uniformly as required by Conjecture~\ref{conj:FI1}.

\subsection{The method and main results}

We test Conjecture~\ref{conj:FI1} on
\[
  A(s)=B(s)=\zeta(s)^m.
\]
Its coefficients are the $m$-fold divisor numbers
\begin{equation}\label{eq:dm-definition-intro}
  d_m(n)
  :=\#\{(n_1,\ldots,n_m)\in\mathbb N^m:n_1\cdots n_m=n\},
\end{equation}
so $d_m(n)\ge1$.  With
\[
  \phi_m:=\frac{\pi(m-3)}4,
\]
the corresponding sharp sum is
\begin{equation}\label{eq:Bm-intro}
  \Bcal_m(x,M)
  :=\sum_{n\le M}d_m(n)n^{-\beta_m}
  \cos\!\left(2\pi m(nx)^{1/m}+\phi_m\right).
\end{equation}

Starting from any prescribed scale $x\asymp \lambda N^a$, we perturb $x$ by a relative amount of order $N^{-(a+1)/m}$ so that the phase of the final term $n=N$ is exactly at a maximum of the cosine. Near this endpoint, the phase changes on the scale 
\[ H\asymp N^{1-(a+1)/m}. \] 
The last $H$ terms remain in one fixed positive arc. Since $d_m(n)\ge1$, their total weighted mass is 
\[ HN^{-(m+1)/(2m)} \asymp N^{(m-3-2a)/(2m)}. \]
This terminal block is exactly the difference of the two sharp partial sums at $N-H$ and $N$. The following theorem records this endpoint mechanism with its precise scale and uniformity.

\begin{theorem}[Sharp endpoint lower bound]\label{thm:main}
Let $m\ge2$ be an integer, let $\lambda>0$, and let $0<a<m-1$.  For every
sufficiently large integer $N$, there exist $x_N>0$, an integer $H_N$ with
$1\le H_N<N/2$, and
\[
  M_N\in\{N-H_N,N\}
\]
such that
\begin{align}
  x_N
  &=\lambda N^a\left(1+O_{m,a,\lambda}
    \bigl(N^{-(a+1)/m}\bigr)\right),\label{eq:main-x}\\
  H_N
  &\asymp_{m,a,\lambda}N^{1-(a+1)/m},\label{eq:main-H}
\end{align}
and
\begin{equation}\label{eq:main-lower}
  |\Bcal_m(x_N,M_N)|
  \gg_{m,a,\lambda}N^{\Delta_m(a)},
\end{equation}
where $\Delta_m(a):=\frac{m-3-2a}{2m}$. More precisely,
\begin{equation}\label{eq:main-block}
  |\Bcal_m(x_N,N)-\Bcal_m(x_N,N-H_N)|
  \gg_{m,a,\lambda}N^{\Delta_m(a)}.
\end{equation}
\end{theorem}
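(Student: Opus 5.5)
The plan is to analyse the phase $f(t):=2\pi m(tx)^{1/m}+\phi_m$ near the endpoint $t=N$ and choose $x$ so that $f(N)\equiv 0 \pmod{2\pi}$. This puts the last term at a maximum of the cosine, so that a whole terminal block of terms has cosine bounded below by a positive constant. Positivity $d_m(n)\ge1$ then gives a lower bound for the block.

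\emph{Choice of $x_N$.} Set $x_0=\lambda N^a$ and $g(x):=2\pi m(Nx)^{1/m}+\phi_m$. We have
\[
g'(x)=2\pi N^{1/m}x^{1/m-1}\asymp N^{(a+1)/m-a}.
\]
A change of $g$ by at most $2\pi$ is therefore achieved by a relative change of $x$ of size
\[
\frac{2\pi}{x_0\,g'(x_0)}\asymp N^{-(a+1)/m}.
\]
Since $g$ is continuous and increasing, I pick $x_N\in[x_0,x_0(1+CN^{-(a+1)/m})]$ with $g(x_N)\in2\pi\Z$. This yields \eqref{eq:main-x}.

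\emph{Phase variation and choice of $H_N$.} For $t\in[N/2,N]$,
\[
f'(t)=2\pi x^{1/m}t^{1/m-1}\asymp N^{(a+1)/m-1}.
\]
Because $a<m-1$, this derivative tends to $0$. By the mean value theorem, $|f(n)-f(N)|\le c_0 H N^{(a+1)/m-1}$ for $N-H\le n\le N$, with $c_0$ depending only on $m,a,\lambda$. I take
\[
H_N:=\left\lfloor \frac{\pi}{3c_0}N^{1-(a+1)/m}\right\rfloor.
\]
This is $\ge1$ and $<N/2$ for large $N$, since the exponent $1-(a+1)/m$ lies in $(0,1)$. It gives \eqref{eq:main-H}, and every $n\in(N-H_N,N]$ has $\cos f(n)\ge\cos(\pi/3)=1/2$.

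\emph{Block lower bound.} The difference $\Bcal_m(x_N,N)-\Bcal_m(x_N,N-H_N)$ is exactly $\sum_{N-H_N<n\le N} d_m(n)n^{-\beta_m}\cos f(n)$. Each term is at least $\tfrac12 N^{-\beta_m}$, so the block is at least
\[
\tfrac12 H_N N^{-\beta_m}\gg N^{1-(a+1)/m-(m+1)/(2m)}=N^{\Delta_m(a)},
\]
which is \eqref{eq:main-block}. Finally, by the triangle inequality one of $|\Bcal_m(x_N,N)|$ and $|\Bcal_m(x_N,N-H_N)|$ is at least half the block, and that choice defines $M_N$, giving \eqref{eq:main-lower}.

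The only delicate point is keeping all implied constants uniform in $N$, which means tracking the dependence on $m,a,\lambda$ in the two derivative estimates. The hypothesis $a<m-1$ is exactly what guarantees $H_N\to\infty$ and $H_N=o(N)$.
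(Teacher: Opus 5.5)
Your proof is correct and is essentially the paper's argument. The paper obtains Theorem~\ref{thm:main} from the general sector lemma (Theorem~\ref{thm:sector}) with $\nu=1/m$, $\kappa=2\pi m$, $\Omega(t)=\cos(t+\phi_m)$ and $c_n=d_m(n)\ge1$, using the same steps: exact endpoint resonance, $H_N\asymp 1/\Phi_N'(N)$, a positive terminal block, and the triangle inequality. The only cosmetic differences are that you choose $x_N$ by the intermediate value theorem rather than by the explicit formula \eqref{eq:explicit-x}, and you work directly at the cosine maximum instead of rotating a general nonvanishing periodic profile.
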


A positive value of $\Delta_m(a)$ gives a power-sized obstruction to any uniform subpower estimate, hence produces counterexamples for the conjecture~\ref{conj:FI1}.

\begin{corollary}\label{cor:consequences}
The following statements hold.
\begin{enumerate}[label=\textup{(\roman*)},leftmargin=2.5em]
\item Let $m\ge4$ and
\begin{equation}\label{eq:a-power}
  0<a<\frac{m-3}{2}.
\end{equation}
Then \eqref{eq:FI-conj-intro} fails for $B(s)=\zeta(s)^m$ along a sequence
with $x_N\asymp N^a$.  

\item Let $m\ge6$ and $\lambda>1$.  One may take $a=1$, in which case
\[
  1\le N-H_N<N\le x_N
\]
for all sufficiently large $N$, and
\begin{equation}\label{eq:linear-lower-intro}
  |\Bcal_m(x_N,M_N)|
  \gg_{m,\lambda}N^{(m-5)/(2m)}.
\end{equation}
Thus the Conjecture~1 fails even in the printed
range $1\le N\le x$. 

\item Let $m\ge5$.  One may take
\[
  a=\frac2{m-1},
  \qquad
  \lambda=1,
\]
so that
\[
  N=x_N^{(m-1)/2}(1+o(1))
\]
and
\begin{equation}\label{eq:balancing-lower-intro}
  |\Bcal_m(x_N,M_N)|
  \gg_m N^{\frac{m^2-4m-1}{2m(m-1)}}.
\end{equation}
The exponent is positive precisely for integral $m\ge5$.  
\end{enumerate}
\end{corollary}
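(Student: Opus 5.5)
The plan is to deduce each part directly from Theorem~\ref{thm:main} by choosing $a$ and $\lambda$ and reading off the exponent $\Delta_m(a)=\frac{m-3-2a}{2m}$. A preliminary step is to check that $\Bcal_m$ is exactly the Friedlander--Iwaniec sum $\Bcal_{\ell,D}$ for $A=B=\zeta^m$. Here the conductor is $D=1$ and the coefficients are $b(n)=d_m(n)$. The archimedean weight is $k=0$, since $\gamma(s)$ comes from the $m$-th power of the functional equation of $\zeta$. Hence $\ell=m-3$ and $\pi\ell/4=\phi_m$, so \eqref{eq:Bm-intro} coincides with \eqref{eq:FI-B-intro} with $N$ replaced by $M$. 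I take this identification of the archimedean datum from the setup recalled in Section~\ref{sec:zeta}; it is the only input not already contained in the theorem.

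For (i), fix $m\ge4$ and $0<a<\frac{m-3}{2}$. This interval is nonempty exactly when $m\ge4$, and it lies inside $(0,m-1)$, so Theorem~\ref{thm:main} applies, say with $\lambda=1$. It gives $x_N\asymp N^a$ and $M_N\le N$ with $|\Bcal_m(x_N,M_N)|\gg N^{\Delta_m(a)}$, where $\Delta_m(a)>0$. If \eqref{eq:FI-conj-intro} held, then with $D=1$ we would have $(DM_Nx_N)^\varepsilon\ll N^{(1+a)\varepsilon}$. Choosing $\varepsilon<\Delta_m(a)/(1+a)$ and letting $N\to\infty$ gives a contradiction. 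The bound $M_N\ge N-H_N>N/2$ is not even needed here, because only an upper bound on $M_N$ enters.

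For (ii), take $a=1$. Then $\Delta_m(1)=\frac{m-5}{2m}$, which is positive exactly when $m\ge6$, and $1<m-1$ holds. By \eqref{eq:main-x}, $x_N=\lambda N(1+O(N^{-2/m}))$, so $\lambda>1$ gives $x_N>N$ for large $N$. Moreover $H_N<N/2$ gives $N-H_N>N/2\ge1$. Hence $1\le M_N\le N\le x_N$, which is the printed range $1\le N\le x$ with $N$ replaced by $M_N$. Then \eqref{eq:linear-lower-intro} is \eqref{eq:main-lower}, and the argument of (i) refutes the conjecture in this range.

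For (iii), take $a=\frac2{m-1}$ and $\lambda=1$. Note that $a<m-1$ for $m\ge2$. From \eqref{eq:main-x}, $x_N=N^{2/(m-1)}(1+o(1))$; raising to the fixed power $\frac{m-1}{2}$ gives $N=x_N^{(m-1)/2}(1+o(1))$. The exponent is
\[
\Delta_m\!\left(\tfrac{2}{m-1}\right)=\frac{(m-3)(m-1)-4}{2m(m-1)}=\frac{m^2-4m-1}{2m(m-1)}.
\]
Its numerator is positive precisely when $m>2+\sqrt5\approx4.24$, that is, for integers $m\ge5$.

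I expect no real obstacle, since all three parts are bookkeeping on top of Theorem~\ref{thm:main}. The point needing most care is the identification $D=1$, $k=0$, $\ell=m-3$ for $\zeta^m$ in the Friedlander--Iwaniec normalization. The second point is that the uniformity in Conjecture~\ref{conj:FI1} allows the truncation $M_N$ to play the role of $N$ there.
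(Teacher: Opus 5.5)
Your proposal is correct and follows the paper's proof essentially line by line. Each part is read off from Theorem~\ref{thm:main} with the stated choices of $a$ and $\lambda$, and the refutation compares the lower bound with $(M_Nx_N)^{\varepsilon}\ll N^{(1+a)\varepsilon}$; you use the single cutoff $M_N$ from \eqref{eq:main-lower} where the paper uses both cutoffs via \eqref{eq:main-block}, which is equivalent. One harmless slip: $\frac{2}{m-1}<m-1$ holds for $m\ge3$, not for $m\ge2$ (at $m=2$ one gets $a=2>1$), but this does not matter because part (iii) assumes $m\ge5$.
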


\begin{remark}[Range law]\label{rem:range-law}
For $m>3$, writing $N\asymp x^\rho$ amounts to taking $a=1/\rho$.  The
exponent in \eqref{eq:main-lower} is positive exactly when \(\rho>\frac2{m-3}.\)
\end{remark}

\begin{remark}
The constant shown in the printed display (1.13) is instead
$w(\pi m)^{-1/2}D^{1/(2m)}$.  The last formula on p.~499, gives \( w\left(\frac{2Q}{\pi m}\right)^{1/2} =\frac{wD^{1/(2m)}}{\pi\sqrt m}. \) Thus the printed statement contains an extraneous factor $\sqrt\pi$.  The
same discrepancy persists in the printed degree-two specialization, and the
typo is recorded explicitly in \cite[p.~835, footnote~2]{Leung}.  This fixed
nonzero factor has no effect on the sharp-sum conjecture or on any exponent
below.  Clozel and Sarnak further explain that a general archimedean analysis
may carry two leading constants rather than a single cosine; see
\cite[Hypothesis~2.1, Lemma~2.1, and (2.30)]{ClozelSarnak}.  For the datum
$\zeta(s)^m$, all gamma parameters are real and zero, so the cosine form is
valid.  The two-frequency version is treated in Remark~\ref{rem:CS-scope}.
\end{remark}

\subsection{Remarks on Friedlander--Iwaniec Conjecture~2}
\label{subsec:importance-C2}

Set
\[
  E_A(x)
  :=
  \sum_{n\le x}a(n)-R_A(x),
  \qquad
  R_A(x)
  :=
  \Res_{s=1}\left(A(s)\frac{x^s}{s}\right).
\]
Friedlander and Iwaniec formulate the following conjectural bound.

\begin{conjecture}[Friedlander--Iwaniec Conjecture~2]
\label{conj:FI2}
Let $A(s)$ be an automorphic series of degree $m$ and conductor $D$
in the Friedlander--Iwaniec framework. Then, for every
$\varepsilon>0$,
\[
  E_A(x)
  \ll_{\varepsilon,\boldsymbol\kappa}
  D^{1/(2m)}x^{(m-1)/(2m)+\varepsilon},
\]
where $\boldsymbol\kappa=(\kappa_1,\ldots,\kappa_m)$ is the
archimedean parameter array.
\end{conjecture}

Friedlander and Iwaniec note that even the Riemann hypothesis for
$A(s)$ gives only
\[
  E_A(x)\ll_\varepsilon x^{1/2}(Dx)^\varepsilon;
\]
see \cite[p.~500]{FriedlanderIwaniec}. They propose to deduce
Conjecture~\ref{conj:FI2} from Conjecture~\ref{conj:FI1}, after
relaxing the printed condition $N\le x$, by choosing
\[
  N\asymp D^{1/2}x^{(m-1)/2}.
\]
Without additional arithmetic input, their functional-equation
argument gives
\[
  E_A(x)
  \ll_{\varepsilon,\boldsymbol\kappa}
  D^{1/(m+1)}x^{(m-1)/(m+1)+\varepsilon},
  \qquad x\ge D^{1/2},
\]
while for
\[
  A(s)=L(s,\chi_1)L(s,\chi_2)L(s,\chi_3),
  \qquad D=D_1D_2D_3,
\]
they prove
\[
  E_A(x)\ll_\varepsilon D^{38/75}x^{37/75+\varepsilon};
\]
see \cite[Proposition~1.1 and Theorem~4.2]{FriedlanderIwaniec}.
The counterexamples in the present paper invalidate the proposed
uniform sharp-dual-sum route to Conjecture~\ref{conj:FI2}, but do not
themselves disprove Conjecture~\ref{conj:FI2}. For $A(s)=\zeta(s)^m$, one has
\[
  R_A(x)=xP_{m-1}(\log x),
\]
with $P_{m-1}$ a polynomial of degree $m-1$, and
Conjecture~\ref{conj:FI2} becomes the Piltz divisor conjecture
\[
  \Delta_m(x)
  :=
  \sum_{n\le x}d_m(n)-xP_{m-1}(\log x)
  \ll_{m,\varepsilon}
  x^{(m-1)/(2m)+\varepsilon}.
\]
For $m=2$, Hardy proved
\[
  \Delta_2(x)\ne o(x^{1/4}),
\]
so the exponent $1/4$ cannot be decreased \cite{Hardy}. Huxley proved
\[
  \Delta_2(x)\ll_\varepsilon x^{131/416+\varepsilon},
  \qquad
  \frac{131}{416}=0.314903\ldots,
\]
\cite{Huxley}, and the preprint of Li and Yang announces
\[
  \Delta_2(x)
  \ll_\varepsilon
  x^{0.3144831759741\ldots+\varepsilon};
\]
see \cite[Theorem~1.2]{LiYang}. For $m=3$, Kolesnik proved
\[
  \Delta_3(x)\ll_\varepsilon x^{43/96+\varepsilon},
\]
whereas the conjectural exponent is $1/3$; see \cite{Kolesnik} and
\cite[p.~504]{FriedlanderIwaniec}.

There is a parallel degree-two cuspidal problem. For the normalized
Hecke eigenvalues $\lambda_f(n)$ of a holomorphic Hecke newform,
Conjecture~\ref{conj:FI2} predicts
\[
  \sum_{n\le x}\lambda_f(n)
  \ll_{f,\varepsilon}x^{1/4+\varepsilon}.
\]
In the newform setting treated by Wu,
\[
  \sum_{n\le x}\lambda_f(n)
  \ll_f
  x^{1/3}(\log x)^{\rho_{1/2}^{+}},
  \qquad
  \rho_{1/2}^{+}=-0.118515\ldots;
\]
see \cite[Theorem~2]{Wu}. Conversely, Hafner and Ivi\'c establish,
for the cusp forms considered there and some $c_f>0$,
\[
  \sum_{n\le x}\lambda_f(n)
  =
  \Omega_\pm\!\left(
    x^{1/4}
    \exp\!\left\{
      c_f\frac{(\log_2x)^{1/4}}{(\log_3x)^{3/4}}
    \right\}
  \right);
\]
see \cite[Theorem~2]{HafnerIvic}. Thus $1/4$ is the correct
power scale for lower bounds, although the corresponding pointwise
upper bound remains open.

As a higher-degree example, let $\pi$ be a $\mathrm{GL}_3$
Hecke--Maass cusp form and let $f$ be a $\mathrm{GL}_2$ holomorphic
or Maass cusp form. Under the Ramanujan--Petersson conjecture,
Lin and Sun prove
\[
  \sum_{r^2n\le x}
  \lambda_\pi(r,n)\lambda_f(n)
  \ll_{\pi,f,\varepsilon}
  x^{5/7-1/364+\varepsilon};
\]
see \cite[Corollary~1.4]{LinSun}. This saves a power over the general
Friedlander--Iwaniec exponent $5/7$, while
Conjecture~\ref{conj:FI2} predicts the exponent $5/12$.

\subsection{Organiztion}
Section~\ref{sec:sector} proves a general endpoint-sector theorem.
Section~\ref{sec:zeta} verifies the Friedlander--Iwaniec datum for
$\zeta(s)^m$ and proves Theorem~\ref{thm:main} and
Corollary~\ref{cor:consequences}.  
\subsection{Notation and convention}
Throughout the paper,
\[
  \mathbb N:=\{1,2,\ldots\},
\]
and every sum over $n\le y$ is over positive integers.  An \emph{arithmetic
sequence} is a function $f:\mathbb N\to\mathbb C$.   If $G\ge0$, then
$F\ll_{\mathcal P}G$, equivalently $F=O_{\mathcal P}(G)$, means that
$|F|\le C_{\mathcal P}G$ for a constant depending only on the displayed fixed
parameters $\mathcal P$.  We write $F\asymp_{\mathcal P}G$ when both
$F\ll_{\mathcal P}G$ and $G\ll_{\mathcal P}F$ hold, $F=o(G)$ when
$F/G\to0$, and $F\sim G$ when $F/G\to1$.  Unless another limit is specified,
asymptotic statements involving $N$ are as $N\to\infty$.  A quantity is
\emph{power-sized} if it is $\gg N^\delta$ for some fixed $\delta>0$, and is
\emph{subpower} if it is $O_\varepsilon(N^\varepsilon)$ for every
$\varepsilon>0$.  If $G(x)>0$ for all sufficiently large $x$, then
$F=\Omega_+(G)$ means $\limsup_{x\to\infty}F(x)/G(x)>0$,
$F=\Omega_-(G)$ means $\liminf_{x\to\infty}F(x)/G(x)<0$, and
$F=\Omega_\pm(G)$ means that both assertions hold.  We write $\log_j$ for
the $j$-fold iterated natural logarithm.

\section{A sharp-endpoint sector lemma}\label{sec:sector}

For $u,v\in\mathbb R$, write
\[
  \operatorname{dist}_{2\pi}(u,v)
  :=
  \min_{j\in\mathbb Z}|u-v-2\pi j|.
\]
Let $\Phi_x:[1,\infty)\to\mathbb R$ be a real-valued phase depending
on a parameter $x>0$, and let
$\Omega:\mathbb R\to\mathbb C$ be a continuous
$2\pi$-periodic function. We consider a sum of the
form
\[
  S_\Omega(x,M)
  =
  \sum_{n\le M}c_nw_n\Omega(\Phi_x(n)),
\]
the factor $c_nw_n$ is a certain arithmetic input.

The following observation isolates the elementary mechanism behind all of our counterexamples. The phase \[ \Phi_x(n):=\kappa(nx)^\nu \] varies slowly near a large endpoint because \(0<\nu<1\). We choose \(x\) so that the terminal phase \(\Phi_x(N)\) falls at a point \(t_0\pmod{2\pi}\) where the periodic profile \(\Omega\) does not vanish. By continuity, after one fixed rotation in the complex plane, \(\Omega(\Phi_x(n))\) then has uniformly positive real part as long as \(\Phi_x(n)\) remains in a sufficiently short arc around \(t_0\). If \(x\asymp N^a\), then, for \(n\asymp N\), \[ \Phi_x'(n) =\kappa\nu x^\nu n^{\nu-1} \asymp N^{\nu(a+1)-1}. \] Hence the phase changes by only \(O(1)\) on a terminal interval of the reciprocal first-derivative length \[ H\asymp \frac{1}{|\Phi_x'(N)|} \asymp N^{1-\nu(a+1)}. \] The condition \[ -1<a<\nu^{-1}-1 \] has a transparent meaning: its lower bound ensures that the resonant frequency tends to infinity, while its upper bound ensures that \(H\to\infty\); together they also give \(H=o(N)\). Because the coefficients are bounded below by a positive constant, the coherent block has weighted mass \[ \sum_{N-H<n\le N}c_n n^{-\beta} \asymp HN^{-\beta} \asymp N^{1-\beta-\nu(a+1)}. \] This block is exactly the difference between the two sharp partial sums at \(N-H\) and \(N\). Thus at least one of the two endpoint partial sums must be as large as the coherent block. The next theorem records this principle in a form independent of the particular Friedlander--Iwaniec phase.

\begin{theorem}\label{thm:sector}
Let $0<\nu<1$, let $\beta\in\R$, and let $\kappa>0$.  Let $\Omega:\R\to\C$ be a continuous, $2\pi$-periodic function that is not identically zero, and let $(c_n)_{n\ge1}$ be real numbers satisfying
\begin{equation}\label{eq:coeff-lower}
  c_n\ge c_*>0\qquad(n\ge1).
\end{equation}
For $x>0$ and $M\ge0$, define
\begin{equation}\label{eq:general-S}
  S_{\Omega}(x,M)
  :=\sum_{n\le M}c_n n^{-\beta}
  \Omega\!\left(\kappa(nx)^\nu\right).
\end{equation}
Fix $\lambda>0$ and
\begin{equation}\label{eq:a-general-range}
  -1<a<\nu^{-1}-1.
\end{equation}
Then, for every sufficiently large integer $N$, there exist $x_N>0$ and an integer $H_N$ with $1\le H_N<N/2$ such that
\begin{align}
  x_N
  &=\lambda N^a\left(1+O\bigl(N^{-\nu(a+1)}\bigr)\right),\label{eq:sector-x}\\
  H_N
  &\asymp N^{1-\nu(a+1)},\label{eq:sector-H}
\end{align}
and
\begin{equation}\label{eq:sector-block}
  \bigl|S_{\Omega}(x_N,N)-S_{\Omega}(x_N,N-H_N)\bigr|
  \gg N^{1-\beta-\nu(a+1)}.
\end{equation}
Consequently, for some $M_N\in\{N-H_N,N\}$,
\begin{equation}\label{eq:sector-max}
  |S_{\Omega}(x_N,M_N)|
  \gg N^{1-\beta-\nu(a+1)}.
\end{equation}
All implied constants in this theorem may depend on the fixed data $\nu,\beta,\kappa,\lambda,a,\Omega$, and $c_*$.
\end{theorem}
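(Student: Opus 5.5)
The plan is to make the heuristic preceding Theorem~\ref{thm:sector} rigorous in four steps: fix a good phase value, tune $x$ so the endpoint phase hits it exactly, show the phase stays in a short arc over the last $H$ terms, and read off the lower bound.

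\emph{Choice of target phase.} Since $\Omega$ is continuous, $2\pi$-periodic and not identically zero, we fix $t_0\in\R$ with $\Omega(t_0)\ne0$. Put $\omega:=\Omega(t_0)/|\Omega(t_0)|$. By uniform continuity there is $\delta\in(0,\pi)$ such that
\[
\operatorname{Re}\bigl(\bar\omega\,\Omega(t)\bigr)\ge\tfrac12|\Omega(t_0)|
\qquad\text{whenever } \distT(t,t_0)\le\delta .
\]

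\emph{Choosing $x_N$.} Set $x_0:=\lambda N^a$ and $F(x):=\kappa(Nx)^\nu$. The map $F$ is increasing, and on $[x_0,2x_0]$ its derivative satisfies
\[
F'(x)=\kappa\nu N^\nu x^{\nu-1}\asymp N^{\nu(a+1)}x_0^{-1}.
\]
Because $a>-1$, we have $N^{\nu(a+1)}\to\infty$. The increment of $F$ over $[x_0,x_0(1+C N^{-\nu(a+1)})]$ is therefore $\asymp C$, and it exceeds $2\pi$ once $C$ is a large fixed constant. So there is $x_N$ in this interval with $F(x_N)\equiv t_0\pmod{2\pi}$, which gives \eqref{eq:sector-x}.

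\emph{Phase control over the terminal block.} Write $\Phi(n):=\kappa(nx_N)^\nu$. For $y\in[N/2,N]$,
\[
0<\Phi'(y)=\kappa\nu x_N^\nu y^{\nu-1}\le K N^{\nu(a+1)-1},
\]
with $K$ depending only on the fixed data. We take
\[
H_N:=\bigl\lfloor \delta K^{-1}N^{1-\nu(a+1)}\bigr\rfloor .
\]
The condition $a<\nu^{-1}-1$ forces $1-\nu(a+1)>0$, so $H_N\to\infty$. Moreover $\nu(a+1)>0$ gives $H_N=o(N)$. Hence $1\le H_N<N/2$ for large $N$, and \eqref{eq:sector-H} holds.

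For $N-H_N<n\le N$ the mean value theorem gives $0\le\Phi(N)-\Phi(n)\le\delta$, and so $\distT(\Phi(n),t_0)\le\delta$. For these $n$ we get
\[
\operatorname{Re}\bigl(\bar\omega\,\Omega(\Phi(n))\bigr)\ge\tfrac12|\Omega(t_0)|.
\]

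\emph{Lower bound.} The difference $S_\Omega(x_N,N)-S_\Omega(x_N,N-H_N)$ equals the sum over $N-H_N<n\le N$; this uses that $N$ and $N-H_N$ are integers. Using $|z|\ge\operatorname{Re}(\bar\omega z)$, $c_n\ge c_*$, and $n^{-\beta}\asymp N^{-\beta}$ for $n\in[N/2,N]$ (valid for either sign of $\beta$), we obtain
\[
\bigl|S_\Omega(x_N,N)-S_\Omega(x_N,N-H_N)\bigr|\ge\tfrac12|\Omega(t_0)|\,c_*\,H_N\,\min(1,2^{-\beta})\,N^{-\beta}\gg N^{1-\beta-\nu(a+1)},
\]
which is \eqref{eq:sector-block}. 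The triangle inequality then shows that one of the two partial sums is at least half this size, giving \eqref{eq:sector-max}.

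The only delicate point is the uniformity of constants. We must make sure that $C$, $K$ and $\delta$ depend only on $\nu,\kappa,\lambda,a,\Omega$. Here $\delta$ comes from the uniform continuity of the fixed $\Omega$ at the fixed point $t_0$, and $K$ uses $x_N\le 2\lambda N^a$ together with $y\ge N/2$. Beyond this bookkeeping no real obstacle is expected.
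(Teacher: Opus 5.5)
Your proposal is correct and follows the paper's proof essentially step for step. The only cosmetic difference is that the paper writes $x_N$ explicitly via $j_N=\lceil (Y_N-t_0)/(2\pi)\rceil$ instead of using your intermediate-value choice. One small slip, which does not affect the $\gg$ conclusion: on $[N/2,N]$ one has $n^{-\beta}\ge\min(1,2^{\beta})N^{-\beta}$, not $\min(1,2^{-\beta})N^{-\beta}$; your constant is wrong when $\beta<0$.
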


\begin{proof}
Choose $t_0\in[0,2\pi)$ with $\Omega(t_0)\ne0$, and set
\[
  \xi=\frac{\overline{\Omega(t_0)}}{|\Omega(t_0)|}.
\]
By continuity on the circle, there is a number $\delta\in(0,\pi)$ such that
\begin{equation}\label{eq:sector-arc}
  \operatorname{Re}\bigl(\xi\Omega(t)\bigr)
  \ge\frac12|\Omega(t_0)|
  \qquad
  \text{if }\distT(t,t_0)\le\delta.
\end{equation}
Put
\[
  Y_N=\kappa\lambda^\nu N^{\nu(a+1)}
\]
and choose
\begin{equation}\label{eq:j-choice}
  j_N=\left\lceil\frac{Y_N-t_0}{2\pi}\right\rceil,
  \qquad
  x_N=\frac1N
  \left(\frac{2\pi j_N+t_0}{\kappa}\right)^{1/\nu}.
\end{equation}
Since $2\pi j_N+t_0=Y_N+O(1)$ and $Y_N\to\infty$, the mean-value theorem applied to $u\mapsto u^{1/\nu}$ gives \eqref{eq:sector-x}.  Moreover,
\begin{equation}\label{eq:exact-resonance}
  \kappa(Nx_N)^\nu=2\pi j_N+t_0
  \equiv t_0\pmod{2\pi}.
\end{equation}

Let
\[
  \Phi_N(u)=\kappa(ux_N)^\nu.
\]
For $u\in[N/2,N]$, equation \eqref{eq:sector-x} gives
\begin{equation}\label{eq:general-derivative}
  0<\Phi_N'(u)
  =\kappa\nu x_N^\nu u^{\nu-1}
  \le C N^{\nu(a+1)-1}
\end{equation}
for all sufficiently large $N$, where $C$ depends only on the fixed data.  Choose $\eta>0$ so small that $2C\eta\le\delta$, and set
\begin{equation}\label{eq:H-choice-general}
  H_N=\left\lfloor\eta N^{1-\nu(a+1)}\right\rfloor.
\end{equation}
The two inequalities in \eqref{eq:a-general-range} give
$0<1-\nu(a+1)<1$.  Hence $H_N\to\infty$ and $H_N=o(N)$, so $1\le H_N<N/2$ for all sufficiently large $N$.  If $N-H_N<n\le N$, then by \eqref{eq:general-derivative},
\[
  |\Phi_N(N)-\Phi_N(n)|
  \le C H_NN^{\nu(a+1)-1}
  \le\delta.
\]
Together with \eqref{eq:exact-resonance} and \eqref{eq:sector-arc}, this yields
\begin{equation}\label{eq:sector-positive}
  \operatorname{Re}\bigl(\xi\Omega(\Phi_N(n))\bigr)
  \ge\frac12|\Omega(t_0)|
  \qquad(N-H_N<n\le N).
\end{equation}
On this block, $n\asymp N$, and therefore $n^{-\beta}\asymp_{\beta}N^{-\beta}$.  Using \eqref{eq:coeff-lower} and \eqref{eq:sector-positive}, we obtain
\begin{align*}
  &\bigl|S_{\Omega}(x_N,N)-S_{\Omega}(x_N,N-H_N)\bigr|\\
  &\quad\ge
  \operatorname{Re}\!\left(
    \xi\sum_{N-H_N<n\le N}
    c_n n^{-\beta}\Omega(\Phi_N(n))
  \right)\\
  &\quad\gg H_NN^{-\beta}
  \gg N^{1-\beta-\nu(a+1)}.
\end{align*}
This proves \eqref{eq:sector-block}.  Finally,
\[
  |S_{\Omega}(x_N,N)-S_{\Omega}(x_N,N-H_N)|
  \le |S_{\Omega}(x_N,N)|+|S_{\Omega}(x_N,N-H_N)|,
\]
which implies \eqref{eq:sector-max}.
\end{proof}

\begin{remark}\label{rem:natural-scale}
The length in \eqref{eq:sector-H} is the reciprocal first-derivative scale.  Indeed,
\[
  \Phi_N'(N)\asymp N^{\nu(a+1)-1},
  \qquad
  H_N\asymp\frac1{\Phi_N'(N)}.
\]
\end{remark}

This mechanism may be viewed as complementary to the classical first-derivative principle, see, for example \cite{AriasDeReyna}.  The proof is deterministic and local.  Classical and recent divisor-type $\Omega$-results instead seek values of the external variable for which many terms of a Voronoi expansion
reinforce one another; see \cite{Soundararajan,Sourmelidis,Mahatab,Lamzouri,Jaasaari}. 

Ren and Ye study twists $e(\alpha n^\beta)$ for
which special values of $\alpha$ match the archimedean phase produced
by Vorono\"i summation, thereby creating a stationary point and a main
term \cite{RenYe2010,RenYe2015}.  Likewise, the standard twist studied
by Kaczorowski and Perelli has a distinguished resonance spectrum
reflected in its analytic continuation and polar structure
\cite{KaczorowskiPerelli}. 

\begin{remark}
\label{rem:CS-scope}
In the self-dual archimedean setting considered by Clozel and Sarnak,
the positive- and negative-height contributions lead to the
$2\pi$-periodic profile
\begin{equation}\label{eq:CS-amplitude}
  \Omega_{\omega_1,\omega_2}(t)
  :=
  \omega_1\e^{-i\pi/4-it}
  +
  \omega_2\e^{i\pi/4+it},
  \qquad
  (\omega_1,\omega_2)\ne(0,0);
\end{equation}
see \cite[Hypothesis~2.1, Lemma~2.1, and
(2.30)]{ClozelSarnak}.  Since the Fourier modes $e^{-it}$ and $e^{it}$
are linearly independent,
\[
  \Omega_{\omega_1,\omega_2}\equiv0
  \quad\Longleftrightarrow\quad
  \omega_1=\omega_2=0.
\]
Thus \eqref{eq:CS-amplitude} is a nonzero continuous periodic profile,
and Theorem~\ref{thm:sector} applies directly to
\[
  \Bcal_m^{\omega_1,\omega_2}(x,M)
  :=
  \sum_{n\le M}
  d_m(n)n^{-\beta_m}
  \Omega_{\omega_1,\omega_2}
  \!\left(2\pi m(nx)^{1/m}\right).
\]
Consequently, for every $\lambda>0$ and $0<a<m-1$, all conclusions of
Theorem~\ref{thm:main} remain valid with $\Bcal_m$ replaced by
$\Bcal_m^{\omega_1,\omega_2}$, with constants allowed to depend on
$\omega_1$ and $\omega_2$.

This is only an amplitude-level statement.  It does not assert that an
arbitrary pair $(\omega_1,\omega_2)$ occurs together with the
coefficients $d_m(n)$ in an automorphic summation formula.  Moreover,
the application in \cite{ClozelSarnak} assumes at most a simple pole,
and therefore is not applied here to $\zeta(s)^m$, which has a pole of
order $m$.  Finally, taking $a=1$ and
$\lambda>(2\pi)^m$ gives
\[
  x_N\sim\lambda N,
  \qquad
  1\le N\le(2\pi)^{-m}x_N
\]
for all sufficiently large $N$.  This merely checks compatibility
with the numerical subrange in
\cite[(2.30)]{ClozelSarnak}; it is not an invocation of their
summation formula for the zeta-product datum.
\end{remark}

\section{The zeta-product datum and the sharp conjecture}\label{sec:zeta}

\subsection{Friedlander--Iwaniec normalization}

As in \cite{FriedlanderIwaniec}, we begin with coefficient bounds
\[
  a(n),b(n)\ll_\varepsilon n^\varepsilon
  \qquad(\varepsilon>0),
\]
and Dirichlet series
\[
  A(s)=\sum_{n\ge1}a(n)n^{-s},
  \qquad
  B(s)=\sum_{n\ge1}b(n)n^{-s}.
\]
They assume that $A(s)$ continues meromorphically to $\C$, with no possible pole except one of finite order at $s=1$, that $\gamma(s)$ is holomorphic for $\operatorname{Re}s>1/2$, and that, for $\operatorname{Re}s>1$,
\begin{equation}\label{eq:FI-FE}
  A(1-s)=w\gamma(s)B(s),
  \qquad |w|=1.
\end{equation}
For the gamma-factor class used in their main theorem,
\begin{equation}\label{eq:FI-gamma}
  \gamma(s)
  =(\pi^{-m}D)^{s-1/2}
  \prod_{j=1}^{m}
  \frac{\Gamma((s+\kappa_j)/2)}
       {\Gamma((1-s+\kappa_j)/2)},
  \qquad
  \operatorname{Re}\kappa_j\ge0.
\end{equation}
Here $m$ is the degree, $D\in\mathbb N$ is the conductor, $w$ is the root number, and the $\kappa_j$ are the spectral parameters.  Friedlander and Iwaniec set
\[
  k=\operatorname{Re}\sum_{j=1}^{m}\kappa_j\in\Z,
  \qquad
  \ell=m-3-2k,
\]
and call $k$ the weight (see \cite[(1.4)--(1.8)]{FriedlanderIwaniec}).  Their initial analytic framework permits a pole at $s=1$ of arbitrary finite order and explicitly includes products of zeta and Dirichlet $L$-functions \cite[pp.~494--495]{FriedlanderIwaniec}.

\begin{proposition}[The datum $\zeta(s)^m$]\label{prop:zeta-datum}
For every integer $m\ge2$, the pair
\[
  A(s)=B(s)=\zeta(s)^m
\]
satisfies \eqref{eq:FI-FE}--\eqref{eq:FI-gamma} with
\[
  D=1,
  \qquad w=1,
  \qquad \kappa_1=\cdots=\kappa_m=0,
  \qquad k=0,
\]
and with dual coefficients $b(n)=d_m(n)$.  Thus $\ell=m-3$, and the corresponding sharp sum is exactly \eqref{eq:Bm-intro}.  Moreover,
\begin{equation}\label{eq:dm-lower}
  d_m(n)\ge1\qquad(n\ge1).
\end{equation}
\end{proposition}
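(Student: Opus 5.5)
The plan is to take $m$ copies of the functional equation of $\zeta$ and multiply them together. I start from the classical symmetric form $\pi^{-s/2}\Gamma(s/2)\zeta(s)=\pi^{-(1-s)/2}\Gamma((1-s)/2)\zeta(1-s)$, valid by meromorphic continuation. Solving for $\zeta(1-s)$ gives
\[
  \zeta(1-s)=\pi^{-(s-1/2)}\frac{\Gamma(s/2)}{\Gamma((1-s)/2)}\,\zeta(s),
\]
since $\pi^{-s/2}/\pi^{-(1-s)/2}=\pi^{1/2-s}$. Raising this to the $m$-th power yields
\[
  \zeta(1-s)^m=(\pi^{-m})^{s-1/2}\prod_{j=1}^m\frac{\Gamma(s/2)}{\Gamma((1-s)/2)}\,\zeta(s)^m .
\]
This is exactly \eqref{eq:FI-FE}--\eqref{eq:FI-gamma} with $D=1$, $w=1$ and all $\kappa_j=0$. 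The identity holds as meromorphic functions, so in particular it holds on $\operatorname{Re}s>1$.

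Next I check the side hypotheses one by one.

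\begin{itemize}
\item \emph{Analytic continuation.} $\zeta(s)^m$ continues meromorphically to $\C$, and its only pole is one of order $m$ at $s=1$. This is allowed, since the framework permits a pole of arbitrary finite order.
\item \emph{Holomorphy of $\gamma$.} $\gamma(s)$ is holomorphic for $\operatorname{Re}s>1/2$. Indeed, $\Gamma(s/2)$ has poles only at $s\in -2\mathbb N_0$, and $1/\Gamma((1-s)/2)$ is entire.
\item \emph{Weight.} $\operatorname{Re}\kappa_j=0\ge0$, so $k=0\in\Z$ and hence $\ell=m-3$.
\end{itemize}

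For the coefficients, I expand $\zeta(s)^m=\sum_n d_m(n)n^{-s}$ for $\operatorname{Re}s>1$ by multiplying absolutely convergent series. The coefficient of $n^{-s}$ counts ordered factorizations $n_1\cdots n_m=n$, which is \eqref{eq:dm-definition-intro}. This gives $a(n)=b(n)=d_m(n)$.

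Two facts about $d_m$ are then needed.
\begin{itemize}
\item \emph{Lower bound \eqref{eq:dm-lower}.} The tuple $(n,1,\ldots,1)$ is always a factorization, so $d_m(n)\ge1$.
\item \emph{Upper bound $d_m(n)\ll_\varepsilon n^\varepsilon$.} This follows from $d_m(n)\le d(n)^{m-1}$ (by induction, since $d_m=d_{m-1}*1$ gives $d_m(n)\le d(n)\max_{k\mid n}d_{m-1}(k)$) together with the standard bound $d(n)\ll_\varepsilon n^\varepsilon$.
\end{itemize}

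Finally I substitute $D=1$, $b(n)=d_m(n)$ and $\ell=m-3$ into \eqref{eq:FI-B-intro}. This gives phase $2\pi m(nx)^{1/m}+\pi(m-3)/4=2\pi m(nx)^{1/m}+\phi_m$, which reproduces \eqref{eq:Bm-intro} verbatim.

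I expect no real obstacle here. The only point requiring care is the bookkeeping of the $\pi$-power: one must check that $\pi^{1/2-s}$ raised to the $m$-th power is $(\pi^{-m}D)^{s-1/2}$ with $D=1$, and that no stray constant spoils $w=1$.
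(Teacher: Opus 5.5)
Your proposal is correct and follows the same route as the paper: raise $\zeta(1-s)=\pi^{1/2-s}\Gamma(s/2)\Gamma((1-s)/2)^{-1}\zeta(s)$ to the $m$th power, then check the framework hypotheses (pole of order $m$, holomorphy of $\gamma$ in $\operatorname{Re}s>1/2$, $\operatorname{Re}\kappa_j=0$, coefficient bound), and substitute into the definition of the sharp sum. The differences are minor. You get $d_m(n)\ge1$ from the factorization $(n,1,\ldots,1)$ and $d_m(n)\ll_\varepsilon n^\varepsilon$ from $d_m\le d^{m-1}$, whereas the paper uses $d_m(p^r)=\binom{r+m-1}{m-1}$ with multiplicativity; the paper also notes that real parameters give $\gamma(\overline{s})=\overline{\gamma(s)}$, which is why the single-cosine form applies.
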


\begin{proof}
The Dirichlet series for $\zeta(s)^m$ converges absolutely in $\operatorname{Re}s>1$.  The Riemann zeta function extends meromorphically to $\C$, with a unique pole, simple and located at $s=1$, and satisfies
\[
  \zeta(1-s)
  =\pi^{1/2-s}
  \frac{\Gamma(s/2)}{\Gamma((1-s)/2)}\zeta(s).
\]
See, for example, Titchmarsh \cite[Theorem~2.1]{Titchmarsh}.  It follows that $\zeta(s)^m$ has a pole of order $m$ at $s=1$ and no other pole.  Raising the displayed functional equation to the $m$th power gives \eqref{eq:FI-FE}--\eqref{eq:FI-gamma} with
\[
  D=1,\qquad w=1,\qquad \kappa_1=\cdots=\kappa_m=0.
\]
The boundary value $\operatorname{Re}\kappa_j=0$ is explicitly permitted in \cite[(1.4)]{FriedlanderIwaniec}.  For these parameters, the quotient in \eqref{eq:FI-gamma} is holomorphic in $\operatorname{Re}s>1/2$: the numerator $\Gamma(s/2)$ has poles only at $s=0,-2,-4,\ldots$, while the reciprocal of the denominator gamma function is entire.  The parameter array is real, so
\[
  \gamma(\overline{s})=\overline{\gamma(s)}.
\]
Consequently the leading constants in the positive- and negative-height Stirling expansions are conjugate, and the two modes combine into the cosine in \eqref{eq:FI-B-intro}.  Thus $k=0$, $\ell=m-3$, and the displayed sharp sum is \eqref{eq:Bm-intro}.

The coefficients are the integers $d_m(n)$ defined in \eqref{eq:dm-definition-intro}.  They are multiplicative, and for every prime power $p^r$,
\[
  d_m(p^r)=\#\{(r_1,\ldots,r_m)\in\mathbb Z_{\ge0}^m:
                    r_1+\cdots+r_m=r\}
           =\binom{r+m-1}{m-1}.
\]
This proves \eqref{eq:dm-lower}.  For completeness, fix $\varepsilon>0$.  The number of weak compositions above is at most $m^r$, because a word of length $r$ on an $m$-letter alphabet maps surjectively to its vector of letter multiplicities.  Hence
\[
  d_m(p^r)\le m^r\le p^{\varepsilon r}
\]
for every prime $p\ge m^{1/\varepsilon}$.  For each of the finitely many smaller primes,
\[
  \sup_{r\ge0}\binom{r+m-1}{m-1}p^{-\varepsilon r}<\infty,
\]
since the binomial coefficient grows polynomially in $r$ whereas $p^{\varepsilon r}$ grows exponentially.  Multiplicativity now gives the required coefficient bound
\[
  d_m(n)\ll_{m,\varepsilon}n^\varepsilon.
\]
\end{proof}

\subsection{Proof of the endpoint lower bound}

\begin{proof}[Proof of Theorem~\ref{thm:main}]
Apply Theorem~\ref{thm:sector} with
\[
  \nu=\frac1m,
  \qquad
  \beta=\beta_m=\frac{m+1}{2m},
  \qquad
  \kappa=2\pi m,
  \qquad
  c_n=d_m(n),
\]
and
\[
  \Omega_m(t)=\cos(t+\phi_m).
\]
The hypothesis $0<a<m-1$ ensures
$-1<a<\nu^{-1}-1$, as required in \eqref{eq:a-general-range}.  By \eqref{eq:dm-lower}, the coefficient hypothesis holds with $c_*=1$.  The exponent in \eqref{eq:sector-block} is
\[
  1-\beta_m-\frac{a+1}{m}
  =\frac{m-3-2a}{2m}
  =\Delta_m(a).
\]
Equations \eqref{eq:main-x}--\eqref{eq:main-block} now follow from Theorem~\ref{thm:sector}.
\end{proof}

The resonant parameter may be made completely explicit.  With
\[
  \alpha=\frac{a+1}{m},
\]
set
\begin{equation}\label{eq:explicit-x}
  J_N=
  \left\lceil
    m\lambda^{1/m}N^\alpha+\frac{m-3}{8}
  \right\rceil,
  \qquad
  x_N=\frac1N
  \left(\frac{J_N-(m-3)/8}{m}\right)^m.
\end{equation}
Then \eqref{eq:main-x} holds and
\begin{equation}\label{eq:exact-cosine-maximum}
  2\pi m(Nx_N)^{1/m}+\phi_m=2\pi J_N.
\end{equation}
Thus the last term is placed exactly at a maximum of the cosine.

\subsection{Proof of Corollary~\ref{cor:consequences}}

\begin{proof}
Suppose first that \eqref{eq:a-power} holds.  Then $\Delta_m(a)>0$.  If \eqref{eq:FI-conj-intro} held for $B(s)=\zeta(s)^m$, it would hold at both cutoffs $M=N-H_N$ and $M=N$.  Since $M\asymp N$ and $x_N\asymp N^a$, this would give
\[
  |\Bcal_m(x_N,M)|\ll_{m,\eps}N^{(1+a)\eps}
\]
for both cutoffs.  Choosing $\eps>0$ so that
\[
  (1+a)\eps<\Delta_m(a)
\]
contradicts \eqref{eq:main-block}.

In fact, if, in addition,
\begin{equation}\label{eq:a-contour}
  a>\frac1{m-1},
\end{equation}
then both cutoffs $N-H_N$ and $N$ belong to the numerical cutoff range generated by a contour height $1\le T\le x_N$ in the Friedlander--Iwaniec
proof. Indeed, for either cutoff $M\in\{N-H_N,N\}$, one has $M\asymp N$ and
\[
  \frac{x_N^{m-1}}{(2\pi)^mM}
  \asymp N^{a(m-1)-1}\longrightarrow\infty.
\]
Hence
\begin{equation}\label{eq:contour-upper}
  M\le\frac{x_N^{m-1}}{(2\pi)^m}
\end{equation}
for large $N$.  Since $x_N\to\infty$ and $M\ge1$, the associated height for $D=1$,
\[
  T_M=2\pi(Mx_N)^{1/m},
\]
satisfies $1\le T_M\le x_N$.  Thus both cutoffs are in the numerical range generated by the contour parameter.  The interval
\[
  \frac1{m-1}<a<\frac{m-3}{2}
\]
is nonempty for every integer $m\ge4$, because
\[
  \frac{m-3}{2}-\frac1{m-1}
  =\frac{m^2-4m+1}{2(m-1)}>0.
\]
This proves part~\textup{(i)}.

For part~\textup{(ii)}, take $a=1$ and $\lambda>1$.  Then
\[
  \Delta_m(1)=\frac{m-5}{2m},
\]
and \eqref{eq:main-x} implies $N\le x_N$ for all sufficiently large $N$.  The lower cutoff is positive because $H_N=o(N)$.  The exponent is positive exactly when $m\ge6$, and the same comparison with $N^{2\eps}$ contradicts \eqref{eq:FI-conj-intro}.

For part~\textup{(iii)}, take
\[
  a=\frac2{m-1},
  \qquad
  \lambda=1.
\]
Then \eqref{eq:main-x} gives
\[
  N=x_N^{(m-1)/2}(1+o(1)),
\]
and
\begin{align*}
  \Delta_m\!\left(\frac2{m-1}\right)
  &=\frac1{2m}\left(m-3-\frac4{m-1}\right)\\
  &=\frac{m^2-4m-1}{2m(m-1)}.
\end{align*}
This number is positive exactly when $m>2+\sqrt5$, hence exactly for integral $m\ge5$.
\end{proof}

\end{document}